 \newtheorem{theorem}{Theorem}[section]
 \newtheorem{lemma}{Lemma}[section]
 \numberwithin{equation}{section}
\def\e{\epsilon}
\newcommand{\beq}{\begin{equation}}
\newcommand{\eeq}{\end{equation}}
 \def\non{\nonumber }
\def\bea{\begin{eqnarray}}
\def\eea{\end{eqnarray}}
\begin{document}

\title{Global Existence and Asymptotic
 Behavior of Solutions to a Chemotaxis-Fluid System on General Bounded Domain}
\author{Jie Jiang\thanks{Wuhan Institute of Physics and Mathematics, Chinese Academy of Sciences,
Wuhan 430071, HuBei Province, P.R. China,
\textsl{jiangbryan@gmail.com}.}, \ \ Hao Wu\thanks{School of Mathematical Sciences and Shanghai Key Laboratory for Contemporary Applied
Mathematics, Fudan University, Shanghai 200433,\ P.R. China,
\textsl{haowufd@yahoo.com}.}\ \ and \ Songmu Zheng\thanks{School of Mathematical Sciences, Fudan University, Shanghai 200433,\ P.R. China,
\textsl{songmuzheng@yahoo.com}.}}
\date{\today}
\maketitle
\begin{abstract} In this paper, we investigate an initial-boundary value problem for a chemotaxis-fluid system in a general bounded regular domain $\Omega \subset \mathbb{R}^N$ ($N\in\{2,3\}$), not necessarily being convex. Thanks to the elementary lemma given by Mizoguchi \& Souplet \cite{MizoSoup}, we can derive a new type of entropy-energy estimate, which enables us to prove the following: (1) for $N=2$, there exists  a unique global classical solution to the full chemotaxis-Navier-Stokes system, which converges to a constant steady state $(n_\infty, 0,0)$ as $t\to+\infty$, and (2) for $N=3$, the existence of a global weak solution to the simplified chemotaxis-Stokes system. Our results generalize the recent work due to Winkler \cite{WinklerCPDE,WinkARMA}, in which the domain $\Omega$ is essentially assumed to be convex.\\
\noindent  {\bf Keywords}: Chemotaxis, Navier-Stokes equation, global existence, general bounded domain.\\
\textbf{AMS Subject Classification}: 35K55, 35Q92, 35Q35, 92C17.
\end{abstract}

\section{Introduction}
In this paper, we study the following initial-boundary value problems for a chemotaxis(-Navier)-Stokes system  \cite{Lorz2010,PNAS2005}:
\beq\begin{cases}\label{chemo1}
n_t+u\cdot\nabla n=\Delta n-\nabla\cdot(n\chi(c)\nabla c),\\
c_t+u\cdot\nabla c=\Delta c-nf(c),\\
u_t=\Delta u+\kappa(u\cdot\nabla)u+\nabla p+n\nabla\phi,\\
\nabla\cdot u=0.\end{cases}
\eeq
The chemotaxis-fluid system \eqref{chemo1} was recently proposed in \cite{PNAS2005} to model the motion of swimming bacteria under the effects of diffusion, oxygen-taxis and transport through an incompressible fluid. Here, we assume that $\Omega\subset \mathbb{R}^N$ $(N\in\{2,3\})$ is a bounded domain with smooth boundary ${\partial\Omega}$. The scalar functions $n$ and $c$ denote the concentration of oxygen and bacteria, respectively. The vector $u$ stands for the velocity field of the fluid subject to an incompressible Navier-Stokes type equation with pressure $p$.  The scalar function $\phi$ stands for the gravitational potential, while $\chi(c)$ and $f(c)$ are the chemotactic sensitivity and the per-capita oxygen consumption rate that may depend on $c$. $\kappa\in\mathbb{R}$ is a parameter such that if $\kappa\neq 0$, the fluid motion is governed by the  Navier-Stokes equation, while $\kappa=0$, the equation for $u$ is simplified to be the Stokes equation. We complement system \eqref{chemo1} with the following Neumann and no-slip boundary conditions
\beq\label{boundcon}\frac{\partial n}{\partial\nu}=\frac{\partial c}{\partial \nu}=0\quad\text{and}\quad u=0\quad\text{for} \;\; x\in {\partial\Omega},\; t>0\eeq
with $\nu$ being the unit outward normal to $\partial\Omega$, and the initial conditions
\beq\label{ini} n(x,0)=n_0(x),\quad c(x,0)=c_0(x),\quad u(x,0)=u_0(x).\eeq

The chemotaxis-fluid system has recently been extensively studied by many authors. Here, we only mention some contributions in the literature that are mostly related to our initial boundary value problem \eqref{chemo1}--\eqref{ini} in a bounded domain $\Omega\subset\mathbb{R}^N$. We refer the reader to \cite{LL2011,DLM2010,CKL2013,CKLCPDE2013,TZJMAA} for the studies on the Cauchy problem in the whole space $\mathbb{R}^N$, and to \cite{FLM2010,LL2011,TW2013} for the case that the linear diffusion term $\Delta n$ is replaced by a nonlinear porous medium type one $\Delta n^m$. When the domain $\Omega$ is bounded and regular, Lorz \cite{Lorz2010} proved the existence of a local weak solution by Schauder's fixed point theory when $N=3$. If the domain $\Omega$ is further assumed to be {\em convex}, then using a key observation made in \cite{TWDCDS2012} and some delicate entropy-energy estimates, Winkler \cite{WinklerCPDE} established the existence of a unique global classical solution with large initial data for $\kappa\in\mathbb{R}$ when $N=2$, and the existence of a global weak solution for with $\kappa=0$ when $N=3$. Later, in \cite{WinkARMA}, the same author further proved that the global classical solution obtained in \cite{WinklerCPDE} in 2D will converge to a constant state $(n_\infty, 0,0)$ as times goes to infinity.
The main purpose of this note is to extend the results by Winkler from the convex domain to the general smooth domain.

Throughout this paper, we denote by $L^q(\Omega)$, $W^{k,q}(\Omega)$, $1\leq q\leq\infty$, $k\in\mathbb{N}$  the usual Lebesgue and Sobolev spaces respectively, and as usual, $H^k(\Omega)=W^{k,2}(\Omega)$.  $\|\cdot\|_{B}$ denotes the norm in the space $B$; we also use the abbreviation $\|\cdot\|:=\|\cdot\|_{L^2(\Omega)}$.

We introduce the following assumptions on the initial data as in \cite{WinklerCPDE}:
\beq
\begin{cases}\label{ini1}
n_0\in C^0(\overline{\Omega}), \quad \ n_0>0\ \text{in}\ \overline{\Omega},\\
c_0\in W^{1,q}(\Omega),\ \  \text{for some }\ q>N,\ \ c_0>0\ \text{in}\ \overline{\Omega},\\
u_0\in D(A^{\alpha}),\quad\, \text{for some}\ \ \alpha\in(\frac{N}{4},1),
\end{cases}
\eeq
where $A$ denotes the realization of the Stokes operator in the solenoidal subspace $L^2_{\sigma}(\Omega)$ that is the closure of $\{u\in (C_0^{\infty}(\Omega))^N; \ \nabla \cdot u=0\}$ in $(L^2(\Omega))^N$ (see e.g., \cite{G}). As for the parameter functions, we suppose that
\beq
\begin{cases}\label{ass1}
\chi\in C^2[0,\infty), \quad \chi>0 \text{  in  }[0,\infty),\\
f\in C^2[0,\infty), \quad f(0)=0,\ f>0 \text{  in  }(0,\infty),\\
\phi\in C^2(\overline{\Omega}),
\end{cases}
\eeq
and
\beq
\left(\frac{f}{\chi}\right)'>0,\quad \left(\frac{f}{\chi}\right)''\leq 0,\quad \left(\chi\cdot f\right)'\geq 0,\quad  \text{on  }[0,\infty).\label{ass4}
\eeq

Now we state our main result of the paper:
\begin{theorem}\label{MT}
Let $\Omega\in \mathbb{R}^N$ $(N\in\{2,3\})$ be a bounded regular domain. Assume that the assumptions \eqref{ini1}--\eqref{ass4} are satisfied.
\begin{itemize}
\item[(i)] If $N=2$ and $\kappa\in \mathbb{R}$, then problem \eqref{chemo1}--\eqref{ini} admits a unique global classical solution $(n,c,u,p)$ (up to addition of a constant to the pressure $p$) such that for any $T>0$,
\beq
\begin{cases}
n\in C([0,T]; L^2(\Omega))\cap L^{\infty}(0,T;C^0(\overline{\Omega}))\cap C^{2,1}(\overline{\Omega}\times(0,T)),\\
c\in C([0,T]; L^2(\Omega))\cap L^{\infty}(0,T;W^{1,q}(\Omega))\cap C^{2,1}(\overline{\Omega}\times(0,T)),\\
u\in C([0,T];L^2(\Omega))\cap L^{\infty}(0,T;D(A^{\alpha}))\cap C^{2,1}(\overline{\Omega}\times(0,T)),\\
p\in L^1(0,T;H^1(\Omega)).
\end{cases}\nonumber
\eeq
Moreover, the global classical solution satisfies
\beq
\lim_{t\to +\infty} \|n(\cdot, t)-n_\infty\|_{L^\infty(\Omega)}+\|c(\cdot, t)\|_{L^\infty(\Omega)}+\|u(\cdot, t)\|_{L^\infty}=0,\label{conver}
\eeq
where $n_\infty=\frac{1}{|\Omega|}\int_\Omega n_0 dx$.
\item[(ii)] If $N=3$ and $\kappa=0$, then there exists at least one global weak solution of \eqref{chemo1}--\eqref{ini} in the sense of \cite[Definition 5.1]{WinklerCPDE}.
\end{itemize}
\end{theorem}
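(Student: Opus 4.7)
\medskip
\noindent\textbf{Proof proposal.}
Local existence and uniqueness of a classical solution $(n,c,u,p)$ on a maximal time interval $[0,T_{\max})$ is standard via Schauder's fixed-point theorem in the spirit of \cite{WinklerCPDE,Lorz2010}, together with the extensibility criterion that $T_{\max}<\infty$ forces an appropriate norm of $(n,c,u)$ to blow up. The heart of the proof is therefore to produce a priori estimates that (a) rule out blow-up in the 2D classical case, (b) yield compactness adequate to pass to a limit in an approximation scheme in the 3D weak case, and then (c) drive the large-time convergence \eqref{conver}. Since in \cite{WinklerCPDE,WinkARMA} every downstream argument bootstraps from one entropy-type estimate, the essential new task is to restore that estimate without assuming $\Omega$ convex.

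The key step is the following modification of Winkler's entropy-energy inequality. Testing the $n$-equation by $\ln n$ and computing $\frac{d}{dt}\int_\Omega \Phi(c)|\nabla\psi(c)|^2\,dx$ for a pair $(\Phi,\psi)$ chosen according to the structural conditions \eqref{ass4}, repeated integration by parts and use of $u\cdot\nabla=\ddiv(u\,\cdot\,)$ produce a dissipation $\mathcal{D}(t)\ge0$ plus a single bad boundary contribution of the form
$$B(t)=C\int_{\partial\Omega}\omega(c)\,\frac{\partial|\nabla c|^2}{\partial\nu}\,d\sigma,$$
with a nonnegative weight $\omega(c)$ bounded thanks to the maximum-principle bound $0\le c\le\|c_0\|_{L^\infty}$. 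On a convex domain the Neumann condition $\partial_\nu c=0$ forces $B(t)\le 0$; in our general setting we invoke the elementary lemma of Mizoguchi--Souplet \cite{MizoSoup}, which delivers $\partial_\nu|\nabla c|^2\le 2M|\nabla c|^2$ on $\partial\Omega$ for a constant $M$ depending only on the second fundamental form of $\partial\Omega$. Combined with the trace interpolation $\|\nabla c\|_{L^2(\partial\Omega)}^2\le \eta\|\Delta c\|_{L^2}^2+C_\eta\|\nabla c\|_{L^2}^2$, the bad term $B(t)$ is absorbed into $\mathcal{D}(t)$ for $\eta$ small. Coupling with the usual $L^2$-energy identity for the (Navier-)Stokes equation forced by $n\nabla\phi$ and applying Gronwall's lemma, one obtains the same a priori control as in the convex case: uniform bounds on $\int_\Omega n\ln n$ and $\|\nabla c\|_{L^2}$, together with $u\in L^\infty(0,T;L^2_\sigma)\cap L^2(0,T;H^1_0)$ on every time slab.

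Granted this entropy-energy estimate, the remainder of the proof is an adaptation of \cite{WinklerCPDE,WinkARMA} that no longer invokes convexity. For (i) with $N=2$, the $L\log L$-bound on $n$ and successive applications of $L^p$-$L^q$ estimates for the heat and Stokes semigroups yield $L^\infty$-control on $n$, $\nabla c$ and the $D(A^\alpha)$-norm of $u$, forcing $T_{\max}=+\infty$ and producing the stated regularity. The convergence \eqref{conver} follows by the $\omega$-limit strategy of \cite{WinkARMA}: the dissipation is integrable in time, uniform parabolic H\"older estimates render the orbit precompact in $C^0$, mass conservation pins the limit of $n$ to $n_\infty$, integrability of $\int_0^\infty\!\!\int_\Omega n f(c)\,dxdt$ together with $f>0$ on $(0,\infty)$ forces $c\to 0$, and the Stokes energy identity forces $u\to 0$. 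For (ii) with $N=3$ and $\kappa=0$, one introduces a regularization such as $\chi_\e(c)=\chi(c)/(1+\e\chi(c))$ so that the 2D-type arguments produce global smooth approximate solutions, derives the entropy-energy estimate uniformly in $\e$, and uses Aubin--Lions compactness to pass to the limit, recovering a weak solution in the sense of \cite[Definition 5.1]{WinklerCPDE}.

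The main obstacle is thus concentrated in the second paragraph: identifying the precise entropy functional compatible with \eqref{ass4} and executing the boundary estimate cleanly via Mizoguchi--Souplet. Everything afterwards---the 2D bootstrap to classical regularity, the 3D compactness argument, and the asymptotic analysis---follows the pattern of \cite{WinklerCPDE,WinkARMA} with only cosmetic modifications, since no step of those arguments uses convexity of $\Omega$ beyond the entropy inequality itself.
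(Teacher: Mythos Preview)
Your overall strategy matches the paper's: recover Winkler's entropy--energy estimate on a nonconvex domain by controlling the boundary term via Mizoguchi--Souplet plus a trace/interpolation argument, and then feed the resulting a priori bounds into the machinery of \cite{WinklerCPDE,WinkARMA}. Two points in your execution of the boundary estimate, however, need correction.

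First, the weight $\omega(c)$ in the boundary term is \emph{not} bounded by the maximum principle. From the structure of the entropy identity the weight is $\omega(c)=\frac{1}{2g(c)}$ with $g=f/\chi$, and since $f(0)=0$ and $\chi(0)>0$ one has $g(0)=0$, so $1/g(c)$ blows up wherever $c$ is small. The upper bound $c\le\|c_0\|_{L^\infty}$ does not help; a uniform lower bound on $c$ is unavailable (indeed $c\to 0$ is part of the conclusion). The paper circumvents this by rewriting the boundary integrand as $|\nabla\psi(c)|^2$ with $\psi'(s)=g(s)^{-1/2}$ and applying the trace theorem directly to $\psi(c)$; because $g'(0)>0$ the function $\psi$ is bounded on $[0,\|c_0\|_{L^\infty}]$, so no singular weight ever appears.

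Second, your interpolation $\|\nabla c\|_{L^2(\partial\Omega)}^2\le \eta\|\Delta c\|^2+C_\eta\|\nabla c\|^2$ (or its $\psi(c)$ analogue) leaves a remainder $\|\nabla\psi(c)\|^2$ that is comparable to the energy itself. Via Gronwall this yields bounds growing exponentially in $T$, which suffices for global existence but not for the claim that ``the dissipation is integrable in time'' underlying the convergence \eqref{conver}. The paper instead interpolates $\psi(c)$ between $H^2$ and $L^2$ through a fractional Sobolev space $H^{(3+s)/2}$, obtaining
\[
\int_{\partial\Omega}|\nabla\psi(c)|^2\,dS \le \e\!\int_\Omega g(c)|D^2\rho(c)|^2\,dx+\e\!\int_\Omega\frac{|g'(c)|^2|\nabla c|^4}{g(c)^3}\,dx+C_\e\|\psi(c)\|^2,
\]
whose remainder $\|\psi(c)\|^2$ is uniformly bounded in time by the $L^\infty$ bound on $c$ and the boundedness of $\psi$ on $[0,\|c_0\|_{L^\infty}]$. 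This is precisely what allows the asymptotic analysis of \cite{WinkARMA} to go through unchanged (the paper even notes that the resulting right-hand side $C\|\nabla u\|^2+C\|\psi(c)\|^2$ is simpler than the $\|u\|_{L^4}^4$ appearing in \cite{WinkARMA}). Once you make these two adjustments---apply the trace to $\psi(c)$ and interpolate down to $L^2$---your outline coincides with the paper's proof.
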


Before giving the detailed proof, we stress some new features of the present paper. Our initial boundary problem \eqref{chemo1} is proposed on any regular bounded domain $\Omega\subset \mathbb{R}^N$ $(N\in\{2,3\})$, while in \cite{WinklerCPDE} convexity of the domain is essentially used in order to prove the global existence results. The main difficulty comes from an integration term on the boundary that takes the following form:
\beq
\int_{\partial\Omega}\frac{\chi(c)}{f(c)}\frac{\partial}{\partial\nu}|\nabla c|^2 dS.\nonumber
\eeq
If $\Omega$ is convex, then the above integrand turns out to be non-positive (cf. \cite{PGG1998}) and as a consequence it can be simply neglected like in \cite{WinklerCPDE}. However, if we consider the problem in a general bounded domain, this integrand fails to have a definite sign and has to be estimated in a suitable way. To overcome this difficulty, we make use of the elementary lemma due to Mizoguchi \& Souplet \cite{MizoSoup} together with the trace theorem to control the above boundary integration. Then by delicate estimates, we are able to derive a new type of entropy-energy estimate, whose right-hand side terms turns out to be easier to handle (cf. \eqref{e1} below). This estimate plays a key role in obtaining global existence results for problem \eqref{chemo1}--\eqref{ini} in a similar manner as in \cite{WinklerCPDE}. Moreover, based on it we can further prove that when $N=2$, the global classical solution will converge to a constant steady state $(n_\infty, 0,0)$ as times goes to infinity. Our work removes the restriction on the convexity of $\Omega$ and thus improves the corresponding results by Winkler \cite{WinklerCPDE, WinkARMA}.

The rest of this paper is organized as follows. In Section 2, we handle the above mentioned boundary integration term with the aid of a gradient estimate result on the boundary from Mizoguchi \& Souplet \cite{MizoSoup}. In Section 3, we derive the key entropy-energy estimate by using some delicate calculations and then sketch the proof for Theorem \ref{MT}.

\section{Estimate of Boundary Integration}
To begin with, we first state two basic properties of the solutions to problem \eqref{chemo1}--\eqref{ini} that have already been obtained in \cite{WinklerCPDE,WinkARMA,LL2011}.
\begin{lemma} \label{lla}
We have
\beq
\int_{\Omega}n(x,t)dx=\int_{\Omega}n_0dx\quad\text{for all    }t>0
\eeq
and
\beq
t\mapsto\|c(\cdot,t)\|_{L^{\infty}(\Omega)} \quad\text{is non-increasing.}
\eeq
In particular,
\beq
\|c(\cdot, t)\|_{L^{\infty}(\Omega)}\leq\|c_0\|_{L^{\infty}(\Omega)}\quad\text{for all    }t>0.
\eeq
\end{lemma}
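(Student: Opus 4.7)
The plan is to prove the two conservation/monotonicity facts separately by standard parabolic arguments, integrating against suitable test functions and exploiting the structure of the boundary and incompressibility conditions. Since the statement is asserted to already be available in \cite{WinklerCPDE,WinkARMA,LL2011}, I would keep the argument brief and assume enough regularity of $(n,c,u)$ (on any interval of existence) to justify the manipulations.

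For the mass identity, I would integrate the first equation of \eqref{chemo1} over $\Omega$. The term $\int_\Omega \Delta n\,dx$ vanishes by the divergence theorem together with $\partial n/\partial\nu=0$; the chemotactic flux $\int_\Omega\nabla\cdot(n\chi(c)\nabla c)\,dx$ vanishes for the same reason using $\partial c/\partial\nu=0$; and the convective term is handled by writing $u\cdot\nabla n=\nabla\cdot(nu)-n\,\nabla\cdot u$, noting $\nabla\cdot u=0$ in $\Omega$ and $u=0$ on $\partial\Omega$, so $\int_\Omega u\cdot\nabla n\,dx=0$. This gives $\frac{d}{dt}\int_\Omega n\,dx=0$, hence the conservation.

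For the $L^\infty$ monotonicity of $c$, I would first record that $n\ge 0$ and $c\ge 0$ pointwise. These follow from the parabolic maximum principle applied to the $n$- and $c$-equations separately, using the positivity of $n_0,c_0$ in $\overline{\Omega}$, the homogeneous Neumann condition, and the structure $-nf(c)$ with $f(0)=0$ for the $c$-equation. Then, for arbitrary $0\le s<t$, set $M:=\|c(\cdot,s)\|_{L^\infty(\Omega)}$, multiply the second equation of \eqref{chemo1} by $(c-M)^+$, and integrate over $\Omega$. The diffusion yields $-\int_\Omega|\nabla(c-M)^+|^2\,dx\le 0$ (the boundary term vanishes by $\partial c/\partial\nu=0$); the transport term $\int_\Omega (u\cdot\nabla c)(c-M)^+dx=\frac12\int_\Omega u\cdot\nabla\bigl[((c-M)^+)^2\bigr]dx$ vanishes after integration by parts using $\nabla\cdot u=0$ and $u|_{\partial\Omega}=0$; and the reaction term contributes $-\int_\Omega n f(c)(c-M)^+dx\le 0$ because $n\ge 0$ and $f\ge 0$ on $[0,\infty)$. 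Hence
\[
\tfrac{d}{d\tau}\int_\Omega\bigl[(c(\cdot,\tau)-M)^+\bigr]^2 dx\le 0\quad\text{for } \tau\ge s,
\]
and since this quantity vanishes at $\tau=s$, it stays zero for all $\tau\ge s$. Therefore $c(\cdot,t)\le M$ a.e., which is exactly the non-increasingness of $\tau\mapsto\|c(\cdot,\tau)\|_{L^\infty(\Omega)}$. Specialising to $s=0$ yields the stated bound by $\|c_0\|_{L^\infty(\Omega)}$.

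No real obstacle is expected: the only points requiring mild care are the a priori nonnegativity of $n$ and $c$ (handled by a standard maximum principle, justified by the regularity of local classical/weak solutions already constructed in the cited works) and the vanishing of the convective contributions, which relies essentially on the combination of incompressibility and the no-slip boundary condition. These are exactly the structural features that make the Neumann/no-slip framework of \eqref{boundcon} compatible with mass conservation and with a comparison principle for the $c$-equation.
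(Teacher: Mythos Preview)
Your argument is correct and is exactly the standard one underlying the references the paper invokes; the paper itself gives no proof of this lemma but merely cites \cite{WinklerCPDE,WinkARMA,LL2011}. Your treatment of the mass identity via the divergence theorem and of the $L^\infty$-monotonicity of $c$ via the Stampacchia truncation $(c-M)^+$ is precisely the route taken in those works, so there is nothing to add.
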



\begin{lemma} The solution satisfies the identity
\beq
\begin{split} \label{iden}
\frac{d}{dt}&\left\{\int_{\Omega}n\log n dx+\frac{1}{2}\int_{\Omega}|\nabla\psi(c)|^2dx\right\}+\int_{\Omega}\frac{|\nabla n|^2}{n}dx+\int_{\Omega}g(c)|D^2\rho(c)|^2dx\\
&=-\frac{1}{2}\int_{\Omega}\frac{g'(c)}{g^2(c)}|\nabla c|^2(u\cdot \nabla c)dx+\int_{\Omega}\frac{1}{g(c)}\Delta c(u\cdot\nabla c)dx\\
&+\int_{\Omega}F(n)\left(\frac{f(c)g'(c)}{2g^2(c)}-\frac{f'(c)}{g(c)}\right)|\nabla c|^2dx\\
&+\frac{1}{2}\int_{\Omega}\frac{g''(c)}{g^2(c)}|\nabla c|^4dx+\frac{1}{2}\int_{\partial\Omega}\frac{1}{g(c)}\frac{\partial}{\partial\nu}|\nabla c|^2dx
 \end{split}
 \eeq
 for all $t\in(0, T_{max})$, where $D^2\rho$ denotes the Hessian of $\rho$ and we have set
 \beq\label{gpr}
 g(s):=\frac{f(s)}{\chi(s)},\ \ \ \psi(s):=\int_1^s\frac{d\sigma}{\sqrt{g(\sigma)}}\ \ \ \ \text{and}\ \ \ \rho(s):=\int_1^s\frac{d\sigma}{g(\sigma)} \ \ \ \text{for  }s>0,
 \eeq
 and $F\in C^2([0,+\infty))$ is a nonnegative function satisfying $0\leq F'(s)\leq 1$ for all $s\geq 0$.
\end{lemma}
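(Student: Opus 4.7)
The plan is to differentiate in time the two pieces of the entropy $\int n\log n+\tfrac12\int|\nabla\psi(c)|^2$, substitute the evolution equations \eqref{chemo1}, and carefully organize the resulting integrations by parts so that the dissipations $\int|\nabla n|^2/n$ and $\int g(c)|D^2\rho(c)|^2$ appear on the left, together with the boundary integral $\tfrac12\int_{\partial\Omega}g^{-1}(c)\partial_\nu|\nabla c|^2\,dS$. Structurally this mirrors the computation in \cite{WinklerCPDE}; the only difference is that we do \emph{not} discard the boundary contribution, as the domain is no longer assumed convex.

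For the first summand, I would test the $n$-equation against $\log n+1$ (with $F$ encoding a regularization that makes the test function admissible when $n$ is not a priori bounded away from $0$ or $+\infty$, which is the origin of the coefficient $F(n)$ on the right-hand side in place of a bare $n$). The drift $u\cdot\nabla n$ integrates to zero by $\nabla\cdot u=0$ and $u|_{\partial\Omega}=0$; the diffusion gives $-\int|\nabla n|^2/n$; and the chemotactic flux, after one further integration by parts using $\partial c/\partial\nu=0$, yields $-\int F(n)[\chi'(c)|\nabla c|^2+\chi(c)\Delta c]\,dx$.

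For the second summand, I would compute pointwise
\[
\partial_t\frac{|\nabla c|^2}{2g(c)}=\frac{\nabla c\cdot\nabla c_t}{g(c)}-\frac{g'(c)\,c_t\,|\nabla c|^2}{2g^2(c)},
\]
integrate, and use $\nabla\rho(c)=\nabla c/g(c)$ to rewrite the first term (via one integration by parts; the boundary piece vanishes since $\partial c/\partial\nu=0$) as $-\int c_t\,\Delta\rho(c)\,dx$. Substituting $c_t=\Delta c-u\cdot\nabla c-nf(c)$ produces in particular the term $-\int\Delta c\,\Delta\rho(c)\,dx$. To extract the Hessian dissipation I would apply the Bochner identity $\tfrac12\Delta|\nabla c|^2=|D^2c|^2+\nabla c\cdot\nabla\Delta c$, multiply by $1/g(c)$, and integrate; one integration by parts in the term $\int g^{-1}(c)\,\tfrac12\Delta|\nabla c|^2\,dx$ produces precisely the boundary integral $\tfrac12\int_{\partial\Omega}g^{-1}(c)\partial_\nu|\nabla c|^2\,dS$, since $\partial_\nu|\nabla c|^2$ does not vanish in general even though $\partial_\nu c=0$. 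The resulting bulk contributions $\int g^{-1}(c)|D^2c|^2\,dx$, together with the $|\nabla c|^4$ corrections, reassemble (by the chain-rule expansion of $D^2\rho(c)$ and the identity $\nabla c\cdot D^2c\cdot\nabla c=\tfrac12\nabla c\cdot\nabla|\nabla c|^2$) into $\int g(c)|D^2\rho(c)|^2\,dx$.

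The main bookkeeping obstacle will be to verify that, upon adding the two computations and using $\chi=f/g$ to expand $\chi'(c)$, the coefficients collapse to exactly $F(n)[f(c)g'(c)/(2g^2(c))-f'(c)/g(c)]$ for $|\nabla c|^2$ and $g''(c)/(2g^2(c))$ for $|\nabla c|^4$, with the fluid-advective terms as written — in particular the $\int n\chi(c)\Delta c$ pieces coming from the two sides must cancel. No new analytical ingredient is needed at this stage — the derivation is purely algebraic — and the genuine obstruction, namely controlling the surviving boundary integral on a non-convex $\Omega$, is left to Section 2 where the Mizoguchi--Souplet lemma and the trace theorem come into play.
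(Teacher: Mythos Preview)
Your proposal is correct and follows the standard derivation from \cite{WinklerCPDE}; note that the paper itself does not prove this lemma but merely cites it from \cite{WinklerCPDE,WinkARMA,LL2011}, so your computation is exactly what underlies the quoted result, with the only change being that the boundary term is retained rather than dropped.
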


Therefore, if the domain $\Omega$ is convex and $\frac{\partial{c}}{\partial \nu}=0$ on $\partial \Omega$, then $\frac{\partial}{\partial \nu}|\nabla c|^2\leq 0$ (cf. \cite{PGG1998}). This implies that the boundary integration term on the right-hand side of \eqref{iden} is non-positive and hence can be simply neglected as in \cite{WinklerCPDE}. However, for general bounded domains, it fails to have a definite sign and thus needs to be estimated in a suitable way.

For this purpose, we introduce a lemma by Mizoguchi \& Souplet \cite[Lemma 4.2]{MizoSoup}, which enables us to deal with the boundary integration in \eqref{iden}.
\begin{lemma} \label{MS} For the bounded domain $\Omega$ and $w\in C^2(\overline{\Omega})$ satisfying $\frac{\partial w}{\partial \nu}=0$ on $\partial\Omega$. We have
\beq \frac{\partial|\nabla w|^2}{\partial\nu}\leq2\kappa|\nabla w|^2\quad\text{on   }\partial\Omega,\eeq
where $\kappa=\kappa(\Omega)>0$ is an upper bound for the curvatures of $\partial\Omega.$
\end{lemma}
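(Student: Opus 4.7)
The plan is to reduce the boundary quantity $\partial_\nu|\nabla w|^2$ to a geometric quantity (the second fundamental form of $\partial\Omega$ applied to $\nabla w$) and bound it by the curvature bound $\kappa$. The Neumann condition $\partial w/\partial\nu=0$ forces $\nabla w$ to lie in the tangent space of $\partial\Omega$ at every boundary point, which is precisely the input that makes the reduction work.

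Concretely, at an arbitrary boundary point $x_0\in\partial\Omega$ I would fix a local orthonormal frame $\{\tau_1,\ldots,\tau_{N-1},\nu\}$ adapted to $\partial\Omega$. A direct differentiation of $|\nabla w|^2=\nabla w\cdot\nabla w$ gives
\begin{equation*}
\frac{\partial|\nabla w|^2}{\partial\nu}=2D^2w(\nabla w,\nu).
\end{equation*}
Because $\nabla w$ is tangent to $\partial\Omega$ at $x_0$, expanding $\nabla w=\sum_{i=1}^{N-1}(\partial_{\tau_i}w)\tau_i$ reduces the right-hand side to a sum of terms $D^2w(\tau_i,\nu)$, each of which is controlled by tangential differentiation of the Neumann condition itself.

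The key algebraic step is to differentiate $\nu\cdot\nabla w=0$ along $\tau_i$: this yields
\begin{equation*}
0=\partial_{\tau_i}(\nu\cdot\nabla w)=(D_{\tau_i}\nu)\cdot\nabla w+\nu\cdot(\partial_{\tau_i}\nabla w),
\end{equation*}
so by the symmetry of the Hessian $D^2w(\tau_i,\nu)=-(D_{\tau_i}\nu)\cdot\nabla w$. Substituting back and using tangency once more, I obtain on $\partial\Omega$
\begin{equation*}
\frac{\partial|\nabla w|^2}{\partial\nu}=-2\sum_i(\partial_{\tau_i}w)\,\nabla w\cdot D_{\tau_i}\nu=-2\,II(\nabla w,\nabla w),
\end{equation*}
where $II$ is (up to a sign that depends on the orientation chosen) the second fundamental form of $\partial\Omega$ associated with the outward normal $\nu$.

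To finish, I would invoke the standard fact that the eigenvalues of the Weingarten map $\tau\mapsto -D_\tau\nu$ on $T_{x_0}\partial\Omega$ are precisely the principal curvatures of $\partial\Omega$ at $x_0$, and that these are uniformly bounded in absolute value by the curvature constant $\kappa=\kappa(\Omega)$ for any $C^2$ bounded domain (by continuity and compactness). This yields $|II(\nabla w,\nabla w)|\leq\kappa|\nabla w|^2$, hence the claimed estimate $\partial_\nu|\nabla w|^2\leq 2\kappa|\nabla w|^2$. I do not anticipate serious difficulty: the lemma is purely geometric, and the only point requiring care is the sign convention for the shape operator — but since the final inequality only needs an upper bound on $-II(\nabla w,\nabla w)$, both conventions yield the same result once $\kappa$ is interpreted as a uniform upper bound on the magnitudes of the principal curvatures.
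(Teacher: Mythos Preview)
The paper does not give its own proof of this lemma; it merely quotes it from Mizoguchi--Souplet \cite{MizoSoup}. Your argument is correct and is precisely the standard differential-geometric proof: use the Neumann condition to see that $\nabla w$ is tangential, differentiate $\nu\cdot\nabla w=0$ along tangential directions to convert $D^2w(\tau_i,\nu)$ into the Weingarten map, and bound the resulting second fundamental form by the curvature bound $\kappa$. This is essentially the argument in \cite{MizoSoup}, so there is nothing to add.
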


Then we can prove the following boundary estimate:

\begin{lemma} \label{BDES}
Suppose that the assumptions of Theorem \ref{MT} hold, and let $g,\psi$ and $\rho$ be defined as in \eqref{gpr}. Then for any $\epsilon\in (0,1)$, the following estimate holds
\beq
\begin{split}
&\frac{1}{2}\left|\int_{\partial\Omega}\frac{1}{g(c)}\frac{\partial}{\partial\nu}|\nabla c|^2dS\right| \\
&\ \ \leq \e\int_{\Omega}g(c)|\Delta\rho(c)|^2dx+\e\int_{\Omega}\frac{|g'(c)|^2|\nabla c|^4}{g(c)^3}dx+C_{\e}\|\psi(c)\|^2,
\end{split}\label{bde1}
\eeq
where $C_\e$ is a constant that may depend on $\Omega$, $\kappa$ and $\e$, but not on $c$.
\end{lemma}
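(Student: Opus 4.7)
The plan is to reduce the boundary term to a volume expression in $\psi(c)$, then use Neumann elliptic regularity for $\psi(c)$ to convert the resulting second-derivative norms to $\Delta\psi$-norms, which can in turn be expressed in terms of the target quantities $\int_\Omega g(c)|\Delta\rho(c)|^2\,dx$ and $\int_\Omega|g'(c)|^2|\nabla c|^4/g(c)^3\,dx$.

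First, I would apply Lemma~\ref{MS} to $w=c$, whose normal derivative vanishes by \eqref{boundcon}. The same proof, based on the boundedness of the second fundamental form of $\partial\Omega$, also yields the matching lower bound, so that the two-sided version $|\partial_\nu|\nabla c|^2|\le 2\kappa|\nabla c|^2$ holds. Since $g(c)>0$ and $|\nabla\psi(c)|^2=|\nabla c|^2/g(c)$ by \eqref{gpr}, this gives
\[
\left|\int_{\partial\Omega}\frac{1}{g(c)}\frac{\partial|\nabla c|^2}{\partial\nu}\,dS\right|\le 2\kappa\int_{\partial\Omega}|\nabla\psi(c)|^2\,dS.
\]

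Next I would transfer this boundary integral back into $\Omega$ via the interpolated trace inequality $\|v\|_{L^2(\partial\Omega)}^2\le C\|v\|_{L^2(\Omega)}\|v\|_{H^1(\Omega)}$, a consequence of $H^{1/2}(\Omega)\hookrightarrow L^2(\partial\Omega)$ together with real interpolation. Applied componentwise to $\nabla\psi(c)$ and followed by Young's inequality, it produces for any $\eta\in(0,1)$
\[
\int_{\partial\Omega}|\nabla\psi(c)|^2\,dS\le \eta\|D^2\psi(c)\|^2+C_\eta\|\nabla\psi(c)\|^2.
\]
Crucially, $\partial_\nu\psi(c)=\psi'(c)\,\partial_\nu c=0$, so $\psi(c)$ satisfies the homogeneous Neumann condition and standard Neumann elliptic regularity on the smooth domain $\Omega$ yields $\|D^2\psi(c)\|^2\le C(\|\Delta\psi(c)\|^2+\|\psi(c)\|^2)$. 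A direct computation using $\psi'=1/\sqrt g$ and $\rho'=1/g$ gives the pointwise identity
\[
\Delta\psi(c)=\sqrt{g(c)}\,\Delta\rho(c)+\frac{g'(c)}{2g(c)^{3/2}}|\nabla c|^2,
\]
whence $\|\Delta\psi(c)\|^2\le 2\int_\Omega g(c)|\Delta\rho(c)|^2\,dx+\frac{1}{2}\int_\Omega\frac{|g'(c)|^2|\nabla c|^4}{g(c)^3}\,dx$, which is exactly the first two target terms up to constants. A second Neumann integration by parts combined with Young handles the remaining piece: $\|\nabla\psi(c)\|^2=-\int_\Omega\psi(c)\Delta\psi(c)\,dx\le \alpha\|\Delta\psi(c)\|^2+\frac{1}{4\alpha}\|\psi(c)\|^2$.

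Chaining these estimates and choosing $\eta$ and $\alpha$ small enough in terms of $\e$, $\kappa$, and the trace/regularity constants, I can force the net coefficients of $\int_\Omega g(c)|\Delta\rho(c)|^2\,dx$ and of $\int_\Omega|g'(c)|^2|\nabla c|^4/g(c)^3\,dx$ to drop below $\e$, while all the remaining contributions are multiples of $\|\psi(c)\|^2$ and combine into a single $C_\e\|\psi(c)\|^2$. I expect the main obstacle to be the passage from $\|D^2\psi(c)\|^2$ to $\|\Delta\psi(c)\|^2$: this step relies essentially on $\psi(c)$ inheriting the homogeneous Neumann condition from $c$, without which the target right-hand side (which contains only the Laplacian-type object $\Delta\rho(c)$) would be too weak to dominate the full Hessian produced by the trace inequality. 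Tracking how the small parameters $\eta,\alpha$ interact with the large constants $C_\eta$ and the elliptic/trace constants is routine but must be done in the right order to pin down $C_\e$ cleanly.
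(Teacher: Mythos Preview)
Your argument is correct and follows the same overall strategy as the paper: apply the Mizoguchi--Souplet bound to reduce to $\int_{\partial\Omega}|\nabla\psi(c)|^2\,dS$, push this into the interior via a trace/interpolation step, use the Neumann condition $\partial_\nu\psi(c)=0$ to replace the full $H^2$ information by $\|\Delta\psi(c)\|^2$, and then express $\Delta\psi(c)$ in terms of $\sqrt{g(c)}\,\Delta\rho(c)$ and $g'(c)|\nabla c|^2/g(c)^{3/2}$.

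The only genuine difference is in the middle step. The paper bounds $\int_{\partial\Omega}|\nabla\psi(c)|^2\,dS$ by $C\|\psi(c)\|_{H^{(3+s)/2}}^2$ for some $s\in(0,\tfrac12)$ and then interpolates the fractional norm between $L^2$ and $H^2$, invoking Neumann $H^2$ regularity implicitly when it replaces $\|\psi(c)\|_{H^2}$ by $\|\Delta\psi(c)\|+\|\psi(c)\|$. You instead apply the multiplicative trace inequality $\|v\|_{L^2(\partial\Omega)}^2\le C\|v\|_{L^2(\Omega)}\|v\|_{H^1(\Omega)}$ directly to $v=\nabla\psi(c)$, and then use Neumann elliptic regularity explicitly, together with the integration-by-parts trick $\|\nabla\psi(c)\|^2\le\alpha\|\Delta\psi(c)\|^2+\tfrac{1}{4\alpha}\|\psi(c)\|^2$ to close. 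Your route is slightly more elementary in that it avoids fractional Sobolev spaces entirely; the paper's route is a bit shorter because the interpolation already produces the sub-unit exponent on $\|\Delta\psi(c)\|$ that Young's inequality needs. Both rely in exactly the same way on the key observation you flagged, namely that $\psi(c)$ inherits the homogeneous Neumann condition so that the $H^2$ norm is controlled by $\|\Delta\psi(c)\|+\|\psi(c)\|$.
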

\begin{proof}
Thanks to Lemma \ref{MS}, we can control the boundary integration term as follows
\beq
 \frac{1}{2}\left|\int_{\partial\Omega}\frac{1}{g(c)}\frac{\partial}{\partial\nu}|\nabla c|^2dS \right|
 \leq
 \kappa(\Omega)\int_{\partial\Omega}\frac{|\nabla c|^2}{g(c)}dS
 =\kappa(\Omega)\int_{\partial\Omega}|\nabla \psi(c)|^2dS.
 \label{bde2}
 \eeq
On the other hand, by the trace theorem \cite[Theorem I.9.4]{LM}, it holds
\beq \int_{\partial\Omega}|\nabla \psi(c)|^2 dS\leq C\|\psi(c)\|_{H^{\frac{3+s}{2}}(\Omega)}^2,
\quad \forall\, s\in (0, \frac12).\label{bde3}
\eeq
where $C>0$  depends only on $\Omega$. Then we infer from the interpolation inequality (see e.g., \cite[Remark I.9.6]{LM}) that
\beq
\begin{split}
\|\psi(c)\|_{H^{\frac{3+s}{2}}(\Omega)}^2&\ \ \leq C\|\psi(c)\|_{H^2(\Omega)}^\frac{3+s}{2}\|\psi(c)\|^\frac{1-s}{2}\\
&\ \ \leq C\|\Delta\psi(c)\|^\frac{3+s}{2}\|\psi(c)\|^\frac{1-s}{2}+C\|\psi(c)\|^2
\end{split}
\label{inter}
\eeq
By direct calculations, we have
\beq \label{id2}
\Delta\psi(c)=\frac{\Delta c}{\sqrt{g(c)}}-\frac{1}{2}\frac{g'(c)|\nabla c|^2}{(g(c))^{\frac{3}{2}}}
\eeq
and
\beq
\Delta\rho(c)=\frac{\Delta c}{g(c)}-\frac{g'(c)|\nabla c|^2}{g(c)^2},\non
\eeq
which yield that
\beq \sqrt{g(c)}\Delta\rho(c)=\frac{\Delta c}{\sqrt{g(c)}}-\frac{g'(c)|\nabla c|^2}{(g(c))^{\frac{3}{2}}}=\Delta\psi(c)-\frac{1}{2}\frac{g'(c)|\nabla c|^2}{(g(c))^{\frac{3}{2}}}.\eeq
Then it follows that
\beq
\label{boun1}
\|\Delta\psi(c)\|^2\leq2\int_{\Omega}g(c)|\Delta\rho(c)|^2dx+\frac{1}{2}\int_{\Omega}\frac{|g'(c)|^2|\nabla c|^4}{g(c)^3}dx.
\eeq
Using the estimates \eqref{bde2}--\eqref{inter}, \eqref{boun1} and Young's inequality, we can conclude \eqref{bde1}. The proof is complete.
\end{proof}

\section{Proof of Theorem \ref{MT}}

Lemma \ref{BDES} enables us to prove the following entropy-energy type estimate for problem \eqref{chemo1}--\eqref{ini}:

\begin{lemma} Suppose that the assumptions of Theorem \ref{MT} hold, and let $g,\psi$ and $\rho$ be defined as in \eqref{gpr}. Then the smooth solution to problem \eqref{chemo1}--\eqref{ini} satisfies
\beq
\begin{split}
& \frac{d}{dt}\left\{\int_{\Omega}n\log n dx+\frac{1}{2}\int_{\Omega}|\nabla\psi(c)|^2dx \right\}
+\int_{\Omega}\frac{|\nabla n|^2}{n}dx +\frac{1}{2}\int_{\Omega}g(c)|D^2\rho(c)|^2dx \\
&\ \ \leq C\|\nabla u\|^2+C\|\psi(c)\|^2\quad \text{for all}\ t\in(0,T_{max}),
\end{split}
\label{e1}
\eeq
where  $C>0$ is independent of $t$, $T_{max}$.
\end{lemma}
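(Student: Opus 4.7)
\noindent The plan is to start from the exact identity \eqref{iden} and bound each of its five right-hand side contributions, reserving a small fraction of the dissipation $\int_\Omega g(c)|D^2\rho(c)|^2\,dx$ to absorb residual terms. Throughout, Lemma \ref{lla} confines $c$ to $[0,\|c_0\|_{L^\infty(\Omega)}]$, so $g$, $g'$ and $g''$ are uniformly bounded on the range of $c$.

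\noindent Two of the five contributions are sign-definite and will simply be dropped. A direct calculation with $g=f/\chi$ gives $fg'-2f'g=-f(\chi f)'/\chi^2$, so that under \eqref{ass4} and $F\ge 0$ the integrand of the $F(n)$-term is $\le 0$; similarly $g''=(f/\chi)''\le 0$ by \eqref{ass4}, so the quartic term with coefficient $g''/g^2$ also has the right sign.

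\noindent For the boundary integral, applying Lemma \ref{BDES} with a small $\epsilon\in(0,1)$ produces three pieces: $\epsilon\int g(c)|\Delta\rho(c)|^2\,dx$, $\epsilon\int|g'(c)|^2|\nabla c|^4/g(c)^3\,dx$, and $C_\epsilon\|\psi(c)\|^2$. The first absorbs into the LHS dissipation via $|\Delta\rho|^2\le N|D^2\rho|^2$, and the last is already of the shape required by \eqref{e1}. For the two convective terms, rewrite $\Delta c=g(c)\Delta\rho(c)+g'(c)|\nabla c|^2/g(c)$ and set $\nabla c=\sqrt{g(c)}\nabla\psi(c)$, so that each integrand is pointwise dominated by products of $|u|$, $|\nabla\psi(c)|$ and at most one copy of $|D^2\rho(c)|$. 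Young's inequality separates off further small multiples of $\int g|D^2\rho|^2$ and of $\int|\nabla\psi|^4\,dx$, leaving the main residue $\int|u|^2|\nabla\psi(c)|^2\,dx$. Applying Hölder together with the Sobolev embedding $H^1_0(\Omega)\hookrightarrow L^q(\Omega)$ (any $q<\infty$ in 2D; $q=6$ in 3D) on $u$, and Gagliardo--Nirenberg interpolation of $\nabla\psi(c)$ between $\|\psi(c)\|$ and $\|\Delta\psi(c)\|$, then converts this residue into $C\|\nabla u\|^2+C\|\psi(c)\|^2$ plus a small multiple of $\|\Delta\psi(c)\|^2$.

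\noindent The remaining $\|\Delta\psi\|^2$- and $\int|g'|^2|\nabla c|^4/g^3$-residues close back into $\tfrac12\int g|D^2\rho|^2$ via the pointwise identity $\sqrt{g}\,\Delta\rho=\Delta\psi-\tfrac12\,g'|\nabla c|^2/g^{3/2}$ coming from \eqref{id2} and via \eqref{boun1}. Choosing the Young constants and then $\epsilon$ sufficiently small in that order makes everything absorb and yields \eqref{e1}. The main obstacle is precisely this last bookkeeping step: the quartic integral $\int|g'(c)|^2|\nabla c|^4/g(c)^3\,dx$ appears both in Lemma \ref{BDES} and in the convective estimate, is not sign-definite, and is controlled only indirectly through $\Delta\psi$, $\Delta\rho$ and \eqref{boun1}, so all small constants must be tuned jointly to ensure the estimate closes.
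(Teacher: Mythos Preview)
Your outline has two genuine gaps, and both concern the pieces you flag as ``bookkeeping''.

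\medskip
\textbf{The quartic term is not closed by the identities you cite.} You produce residues of the form $\int_\Omega|\nabla\psi(c)|^4\,dx$ and $\int_\Omega |g'(c)|^2|\nabla c|^4/g(c)^3\,dx$ (these are comparable since $g'$ is bounded above and below on $[0,\|c_0\|_{L^\infty}]$), and you propose to absorb them into $\tfrac12\int_\Omega g(c)|D^2\rho(c)|^2\,dx$ using only the pointwise relation $\sqrt{g}\,\Delta\rho=\Delta\psi-\tfrac12 g'|\nabla c|^2/g^{3/2}$ together with \eqref{boun1}. But those two relations merely express $\|\Delta\psi\|^2$, $\int g|\Delta\rho|^2$ and the quartic term in terms of one another; they do \emph{not} bound the quartic term by the full Hessian dissipation $\int g|D^2\rho|^2$. (Indeed, if $\Delta\rho(c)\equiv 0$ the identities leave the quartic term completely free.) The missing ingredient is \cite[Lemma~3.3]{WinklerCPDE}, an integration-by-parts estimate giving
\[
\int_\Omega\frac{g'(c)}{g(c)^3}|\nabla c|^4\,dx\ \le\ (2+\sqrt{N})^2\int_\Omega\frac{g(c)}{g'(c)}\,|D^2\rho(c)|^2\,dx,
\]
which is precisely what the paper invokes (see \eqref{boun2}--\eqref{boun3}) and what makes all the quartic residues absorbable.

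\medskip
\textbf{The convective residue does not reduce to $C\|\nabla u\|^2$.} After H\"older and Sobolev on $u$, the term $\int_\Omega|u|^2|\nabla\psi(c)|^2\,dx$ is bounded by $C\|\nabla u\|^2\|\nabla\psi(c)\|_{L^r}^2$ for an appropriate $r$; Gagliardo--Nirenberg then interpolates $\|\nabla\psi\|_{L^r}$ between $\|\psi\|$ and $\|\Delta\psi\|$, but when you Young off $\|\Delta\psi\|^2$ the remaining factor carries a power of $\|\nabla u\|$ strictly larger than $2$ (at best $\|\nabla u\|^4$ in 2D, $\|\nabla u\|^8$ in 3D). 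So your route cannot produce the right-hand side $C\|\nabla u\|^2+C\|\psi(c)\|^2$ stated in \eqref{e1}. The paper avoids this entirely: it first observes that the two convective terms combine to $\int_\Omega\Delta\psi(c)\,(u\cdot\nabla\psi(c))\,dx$ and then integrates by parts, using $\nabla\cdot u=0$ and $u|_{\partial\Omega}=0$, to get
\[
\int_\Omega\Delta\psi(c)\,(u\cdot\nabla\psi(c))\,dx\ =\ -\int_\Omega(\nabla\psi(c)\otimes\nabla\psi(c)):\nabla u\,dx,
\]
which is pointwise $\le |\nabla\psi|^2|\nabla u|$ and hence, by Young, $\le \delta\int|\nabla\psi|^4+C_\delta\|\nabla u\|^2$. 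The quartic piece is then absorbed via Winkler's lemma above. This integration by parts is the step your decomposition misses, and it is what makes the right-hand side of \eqref{e1} linear in $\|\nabla u\|^2$.
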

\begin{proof}
We proceed to estimate the right-hand side of \eqref{iden} term by term. First, applying \cite[Lemma 3.3]{WinklerCPDE}, we obtain that
\beq
\int_{\Omega}\frac{g'(c)}{g(c)^3}|\nabla c|^4dx \leq(2+\sqrt{N})^2\int_{\Omega}\frac{g(c)}{g'(c)}|D^2\rho(c)|^2dx.
\label{boun2}
\eeq
Since $g'>0$ on $[0,+\infty)$ and $0\leq c\leq K:=\|c_0\|_{L^{\infty}}$ (see Lemma \ref{lla}), there exist constants
$$
C_1:=\inf_{s\in(0,K)}g'(s)>0,\quad \text{and} \quad C_2=\sup_{s\in(0,K)}g'(s)>0
$$
such that $C_1\leq g'(c)\leq C_2$ in $\Omega\times(0,T_{max})$. Hence, it follows from \eqref{boun2} that
\beq
\label{boun3}
\int_{\Omega}\frac{|g'(c)|^2}{g(c)^3}|\nabla c|^4dx\leq\frac{C_2}{C_1}(2+\sqrt{N})^2\int_{\Omega}g(c)|D^2\rho(c)|^2dx.
 \eeq

Then for the first two terms on the right-hand side of \eqref{iden}, using \eqref{id2}, after integration by parts, and using the boundary conditions \eqref{boundcon}, we deduce from \eqref{boun3} that
\beq
\begin{split}
&-\frac{1}{2}\int_{\Omega}\frac{g'(c)}{g(c)^2}|\nabla c|^2(u\cdot \nabla c)dx +\int_{\Omega}\frac{1}{g(c)}\Delta c(u\cdot\nabla c)dx \\
&\ \ =\int_\Omega \frac{1}{\sqrt{g(c)}}\Delta\psi(c)(u\cdot\nabla c) dx\\
&\ \ =\int_{\Omega}\Delta\psi(c)(u\cdot \nabla\psi(c))dx\\
&\ \ =-\int_{\Omega}(\nabla\psi(c)\otimes\nabla\psi(c)): \nabla u dx \\
&\ \ \leq \frac{C_1}{4C_2(2+\sqrt{N})^2}\int_{\Omega}\frac{g'(c)}{g(c)^3}|\nabla c|^4dx +\frac{C_2(2+\sqrt{N})^2}{C_1}\int_{\Omega}\frac{g(c)}{g'(c)}|\nabla u|^2dx\\
&\ \ \leq \frac14\int_{\Omega}g(c)|D^2\rho(c)|^2dx+\frac{C_2(2+\sqrt{N})^2}{C_1}\int_{\Omega}\frac{g(c)}{g'(c)}|\nabla u|^2dx,
\end{split}\label{bde2a}
\eeq
where we have also used the incompressibility of the fluid as well as Young's inequality. Here, the notion $A : B$ denotes $\mathrm{Tr}(AB) = A_{ij}B_{ji}$ for two $N\times N$ matrices $A, B$.

As in \cite{WinklerCPDE}, it follows from the assumption \eqref{ass4} that $g''\leq 0$ in $[0, +\infty)$ and $\frac{fg'}{2g^2}-\frac{f'}{g}=-\frac{(\chi f)'}{2f}\leq 0$ on $[0, +\infty)$. Then the third term on the right-hand side of \eqref{iden} is indeed non-positive and can simply be neglected.

Next, we infer from Lemma \ref{BDES} that the last boundary integration term on the right-hand side of \eqref{iden} can be estimated by using \eqref{bde1}. In view of the pointwise inequality $|\Delta z|^2\leq N|D^2 z|^2$ for $z\in C^2(\overline{\Omega})$, the first term on the right-hand side of \eqref{bde1} can be estimated as follows:
\beq
\int_{\Omega}g(c)|\Delta\rho(c)|^2dx\leq  N \int_{\Omega}g(c)|D^2\rho(c)|^2dx.\label{boun1b}
\eeq
As a consequence, it follows from \eqref{bde1}, \eqref{boun3} and  \eqref{boun1b}  that
\beq
\begin{split}
&\frac{1}{2}\left|\int_{\partial\Omega}\frac{1}{g(c)}\frac{\partial}{\partial\nu}|\nabla c|^2dS\right| \\
&\ \ \leq \e\left(N+\frac{C_2}{C_1}(2+\sqrt{N})^2\right)\int_{\Omega}g(c)|D^2\rho(c)|^2dx+C_{\e}\|\psi(c)\|^2.
\end{split}\label{bde1a}
\eeq
Taking $\e>0$ sufficiently small such that $\e\left(N+\frac{C_2}{C_1}(2+\sqrt{N})^2\right)\leq \frac{1}{4}$ in \eqref{bde1a}, we can deduce from \eqref{iden}, \eqref{bde2a} and \eqref{bde1a} that the inequality \eqref{e1} holds. The proof is complete.
\end{proof}

Besides, we can derive the following estimates on the velocity field $u$:

\begin{lemma} Under the assumptions of Theorem \ref{MT}, we have the following estimate
 \beq
 \label{bound4}
 \|u\|^2+\int_0^T\|\nabla u\|^2dt\leq C\left(\int_0^T\int_{\Omega}\frac{|\nabla n|^2}{n}dx+1\right)^{\frac{N}{6}},\;\;\text{for any}\ 0<T<T_{max},
 \eeq
 where the constant $C$ depends on $T$ and $\int_\Omega n_0 dx$.
\end{lemma}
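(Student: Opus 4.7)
My plan is to start with the standard $L^2$ energy estimate for the fluid equation, reduce the forcing term to an $L^{6/5}$ bound on $n$, and control that bound in terms of the Fisher-information type dissipation $\int_\Omega \frac{|\nabla n|^2}{n}dx$ via Gagliardo--Nirenberg applied to $\sqrt{n}$.

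First I would test the third equation of \eqref{chemo1} against $u$ in $L^2(\Omega)$. The pressure term disappears since $\nabla\cdot u=0$ and $u|_{\partial\Omega}=0$, and when $\kappa\neq 0$ the convective term $\kappa\int_\Omega[(u\cdot\nabla)u]\cdot u\, dx$ vanishes for the same two reasons. This yields
\beq
\frac{1}{2}\frac{d}{dt}\|u\|^2+\|\nabla u\|^2=\int_\Omega n(\nabla\phi\cdot u)\, dx.\non
\eeq
Using H\"older's inequality with exponents $6/5$ and $6$, together with the Sobolev/Poincar\'e embedding $\|u\|_{L^6}\leq C\|\nabla u\|$ (valid in $H_0^1(\Omega)$ for $N\leq 3$), and $\phi\in C^2(\overline\Omega)$, I would bound the right-hand side by $C\|n\|_{L^{6/5}}\|\nabla u\|$. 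Young's inequality then absorbs half of $\|\nabla u\|^2$, and integration in time from $0$ to $T<T_{max}$ gives
\beq
\|u(T)\|^2+\int_0^T\|\nabla u\|^2\, dt\leq \|u_0\|^2+C\int_0^T\|n(\cdot,t)\|_{L^{6/5}}^2\, dt.\non
\eeq

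The crux is then to control $\int_0^T\|n\|_{L^{6/5}}^2\, dt$ by the dissipation $\int_0^T\!\int_\Omega\frac{|\nabla n|^2}{n}dx\, dt=4\int_0^T\|\nabla\sqrt{n}\|^2 dt$. Writing $n=(\sqrt{n})^2$ and applying the Gagliardo--Nirenberg inequality to $\sqrt{n}$ in $L^{12/5}$, the scaling condition forces the interpolation exponent to be exactly $N/12$, giving
\beq
\|\sqrt{n}\|_{L^{12/5}}\leq C\bigl(\|\nabla\sqrt{n}\|^{N/12}\|\sqrt{n}\|^{1-N/12}+\|\sqrt{n}\|\bigr).\non
\eeq
Since mass is conserved by Lemma \ref{lla}, $\|\sqrt{n}\|^2=\int_\Omega n\, dx=\int_\Omega n_0\, dx$ is a fixed constant, so squaring the above inequality gives
\beq
\|n\|_{L^{6/5}}^2\leq C(\|\nabla\sqrt{n}\|^{N/3}+1).\non
\eeq
Because $N/3<2$ for $N\in\{2,3\}$, a further H\"older inequality in time with conjugate exponents $6/N$ and $6/(6-N)$ produces
\beq
\int_0^T\|\nabla\sqrt{n}\|^{N/3}\, dt\leq T^{\,1-N/6}\left(\int_0^T\|\nabla\sqrt{n}\|^2\, dt\right)^{N/6}.\non
\eeq
Combining these estimates and using $(A+1)^{N/6}\geq 1$ (with $A$ the dissipation integral) to absorb the constant $\|u_0\|^2$ into the right-hand side yields \eqref{bound4}.

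I do not anticipate a serious obstacle; this is a direct $L^2$-energy estimate combined with a single interpolation inequality. The only delicate point is the specific choice of the $L^{6/5}$--$L^6$ duality pairing on the forcing term, which is what makes the Gagliardo--Nirenberg exponent land precisely at $N/12$ for $\sqrt{n}$ and ultimately produces the clean $N/6$ power on the right-hand side stated in the lemma.
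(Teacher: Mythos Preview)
Your proof is correct and follows essentially the same route as the paper: test the fluid equation against $u$, pair the forcing with $L^{6/5}$--$L^6$ duality and absorb via Sobolev/Young, then control $\int_0^T\|n\|_{L^{6/5}}^2\,dt$ by Gagliardo--Nirenberg on $\sqrt{n}$ together with mass conservation. The only difference is cosmetic: the paper cites \cite[Lemma~4.1]{WinklerCPDE} for the last step, whereas you have written out that interpolation explicitly.
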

\begin{proof}
Multiplying the third equation in \eqref{chemo1} by $u$, integrating over $\Omega$, using the H\"older inequality and the Sobolev embedding theorem, we obtain that
\beq
\begin{split}
\frac{1}{2}\frac{d}{dt}\int_{\Omega}|u|^2dx +\int_{\Omega}|\nabla u|^2dx
& =\int_{\Omega} n\nabla\phi\cdot u dx \\
& \leq \|u\|_{L^6}\|\nabla \phi\|_{L^\infty}\|n\|_{L^\frac65}\\
& \leq \frac{1}{2}\|\nabla u\|^2+C\|\nabla\phi\|_{L^{\infty}}^2\|n\|_{L^{\frac{6}{5}}}^2.\label{aaa}
\end{split}
\eeq
On the other hand, we infer from \cite[Lemma 4.1]{WinklerCPDE} that
\beq
\int_0^T\|n\|_{L^{\frac{6}{5}}}^2dt\leq C(T, \int_\Omega n_0 dx)\left(\int_0^T\int_{\Omega}\frac{|\nabla n|^2}{n}dxdt+1\right)^{\frac{N}{6}}.\label{aaaa}
\eeq
Then integrating \eqref{aaa} with respect to time, we conclude from \eqref{aaaa} that the assertion \eqref{bound4} follows.
\end{proof}

As a consequence of the above lemmas, we can obtain the following \emph{a priori} estimates on solutions to problem \eqref{chemo1}--\eqref{ini}:

\begin{lemma} \label{lemkey} Under the same assumptions of Theorem \ref{MT},  for any $0<T<T_{max}$, we have the following estimates
\beq\label{bound5a}
\int_{\Omega}n\log n dx+\int_{\Omega}|\nabla\psi(c)|^2dx+\int_{\Omega}|u|^2dx\leq C,
\eeq
\beq \int_0^T\int_{\Omega}\frac{|\nabla n|^2}{n}dxdt+\int_0^T\int_{\Omega}g(c)|D^2\rho(c)|^2dxdt+\int_0^T\int_{\Omega}|\nabla u|^2dxdt\leq C,\label{bound5b}
\eeq
and
\beq
\label{bound6}
\int_0^T\int_{\Omega}|\nabla c|^4 dxdt \leq C.
\eeq
\end{lemma}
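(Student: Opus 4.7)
The plan is to close a Grönwall-type argument by combining the entropy-energy inequality \eqref{e1} with an $L^2$-energy estimate for the fluid velocity (essentially \eqref{aaa}), and then using the interpolation \eqref{aaaa} to absorb the remaining cross term.

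First I would dispose of the term $C\|\psi(c)\|^2$ on the right-hand side of \eqref{e1}. By Lemma \ref{lla}, $0 \leq c(x,t) \leq K := \|c_0\|_{L^\infty}$ uniformly on $\Omega \times (0, T_{max})$. Since the constant $C_1 = \inf_{s\in(0,K)} g'(s)$ appearing in the previous lemma is strictly positive, $g(\sigma) \geq C_1 \sigma$ for $\sigma \in [0,K]$, so $1/\sqrt{g}$ is integrable near $0$ and $\psi$ is continuous and bounded on $[0,K]$; hence $\|\psi(c(\cdot,t))\|^2 \leq C$ uniformly in $t$.

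To handle the term $C\|\nabla u\|^2$ I would not use \eqref{bound4} directly (that estimate has $\int_0^T\|\nabla u\|^2$ on the left) but instead go back to the pointwise-in-time inequality behind it, namely $\tfrac12 \tfrac{d}{dt}\|u\|^2 + \tfrac12\|\nabla u\|^2 \leq C\|n\|_{L^{6/5}}^2$ from \eqref{aaa}. Multiplying this by a sufficiently large constant $\lambda$ and adding it to \eqref{e1}, the term $\tfrac{\lambda}{2}\|\nabla u\|^2$ dominates the $C\|\nabla u\|^2$ on the right-hand side of \eqref{e1}, yielding a differential inequality of the form
\[
\frac{d}{dt}\Phi(t) + \Psi(t) \leq C + C\|n\|_{L^{6/5}}^2,
\]
where $\Phi = \int_\Omega n\log n + \tfrac12\int_\Omega |\nabla\psi(c)|^2 + \tfrac{\lambda}{2}\|u\|^2$ and $\Psi = \int_\Omega |\nabla n|^2/n + \tfrac12\int_\Omega g(c)|D^2\rho(c)|^2 + C_\ast\|\nabla u\|^2$ for some $C_\ast > 0$.

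Integrating over $(0,T)$ and applying \eqref{aaaa} bounds the right-hand side by $C(T) + C(T)\bigl(1 + \int_0^T\!\int_\Omega |\nabla n|^2/n\,dxdt\bigr)^{N/6}$. Because $N/6 < 1$ for $N \in \{2,3\}$, Young's inequality absorbs this power of $\int_0^T\!\int |\nabla n|^2/n$ into the corresponding term on the left, and using $n\log n \geq -1/e$ to control the entropy from below yields \eqref{bound5a} and \eqref{bound5b} simultaneously. Finally, for \eqref{bound6} I would invoke \eqref{boun3} together with the uniform bounds $|g'(c)| \geq C_1 > 0$ and $g(c) \leq C_3 := \sup_{[0,K]} g$, which give $g(c)^3 \leq (C_3^3/C_1^2)|g'(c)|^2$ and hence
\[
\int_\Omega |\nabla c|^4\,dx \leq \frac{C_3^3}{C_1^2}\int_\Omega \frac{|g'(c)|^2}{g(c)^3}|\nabla c|^4\,dx \leq C\int_\Omega g(c)|D^2\rho(c)|^2\,dx;
\]
a further time integration, combined with \eqref{bound5b}, produces \eqref{bound6}. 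The main delicate point in this plan is the Young-inequality absorption step, which works precisely because the exponent $N/6$ supplied by \eqref{bound4} is strictly smaller than one in the dimensions under consideration; this is really the structural reason the argument does not extend to higher $N$.
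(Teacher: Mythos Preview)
Your proof is correct and follows essentially the same route as the paper's. The paper phrases it as combining the time-integrated form of \eqref{e1} with the already-integrated estimate \eqref{bound4}, whereas you add \eqref{e1} and $\lambda\cdot$\eqref{aaa} at the differential level before integrating; these are equivalent rearrangements, and the decisive structural ingredients---the uniform boundedness of $\|\psi(c)\|^2$ and the Young-inequality absorption using $N/6<1$---are identical in both, while your explicit derivation of \eqref{bound6} via \eqref{boun3} is exactly what the paper delegates to \cite[Corollary~4.4, Corollary~5.3]{WinklerCPDE}.
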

\begin{proof} The estimates \eqref{bound5a}--\eqref{bound5b} easily follow from \eqref{e1} and \eqref{bound4} together with an application of Young's inequality. Then the estimate \eqref{bound6} can be derived in the same way as in \cite[Corollary 4.4, Corollary 5.3]{WinklerCPDE}.
\end{proof}

After the previous preparations, we can proceed to prove our main result.

\textbf{Proof of Theorem \ref{MT}}. The proof for the existence of global solutions follows a similar argument like in \cite{WinklerCPDE}. First, since the proof in \cite{WinklerCPDE} for the local existence and uniqueness of classical solutions to problem \eqref{chemo1}--\eqref{ini} does not rely on the fact that $\Omega$ is convex, then for the current case with general domain $\Omega$, we can still apply the same fixed point argument to prove the local wellposeness result.  Then by the \emph{a priori} estimates in Lemma \ref{lemkey}, we are able to prove the existence of global classical solutions for $N=2$ and global weak solutions for $N=3$ in the same way as in \cite[Section 4, Section 5]{WinklerCPDE}. Therefore, the details are omitted here.

Concerning the asymptotic behavior for $N=2$, we notice that in our new entropy-energy inequality \eqref{e1}, the right-hand side terms now become $C\|\nabla u\|^2+C\|\psi(c)\|^2$ instead of $C\int_\Omega |u|^4 dx$ as in \cite[(3.11)]{WinklerCPDE} (see also \cite[(2.15)]{WinkARMA}). In \cite{WinkARMA} this (worse) term $\int_\Omega |u|^4 dx$ is estimated by using the Sobolev embedding theorem and Poincar\'e's inequality such that
$$\|u\|_{L^4(\Omega)}^4\leq C\|\nabla u\|^2\|u\|^2,$$
which is a higher-order nonlinearity than $\|\nabla u\|^2$. Since the current right-hand side of \eqref{e1} is simpler and the second term $\|\psi(c)\|^2$ is uniformly bounded in time due to the definition of $\psi$ (cf. \eqref{gpr}) and Lemma \ref{lla}, one can check that all the arguments in \cite{WinkARMA} can pass through. As a consequence, we can conclude the convergence result \eqref{conver}.

Hence, Theorem \ref{MT} is proved.


\bigskip

\noindent \textbf{Acknowledgments:}
J. Jiang is partially supported by National Natural Science Foundation of China (NNSFC) under the grants No. 11201468 and No. 11275259, H. Wu is partially supported by NNSFC under the grant No. 11371098 and Zhuo Xue program in Fudan University, and S. Zheng is partially supported by NNSFC under the grant No. 11131005.

\end{document}